\numberwithin{equation}{section}
\newtheorem{theorem}{Theorem}[section]
\newtheorem{lemma}[theorem]{Lemma}
\newtheorem{proposition}[theorem]{Proposition}
\newtheorem{definition}[theorem]{Definition}
\newtheorem{example}[theorem]{Example}
\newcommand{\END}{\hfill \mbox{\raggedright $\Diamond$}}
\newcommand{\etal}{\emph{et al.}}
\newcommand{\ignore}[1]{}
\newcommand{\qed}{\hfill$\square$}
\newcommand{\N}{\mathbf{N}}
\newcommand{\R}{\mathbf{R}}
\newcommand{\C}{\mathbf{C}}
\newcommand{\SOtwo}{\mathbf{SO}(2)}
\newcommand{\GL}{\mathbf{GL}}
\newcommand{\Ttwo}{\mathbf{T}^2}
\newcommand{\Ztwo}{\mathbf{Z}_2}
\newcommand{\Zm}{\mathbf{Z}_m}
\newcommand{\Zthree}{\mathbf{Z}_3}
\newcommand{\IM}{\mathrm{Im}}
\newcommand{\Psigma}{\mathcal{P}_{\sigma}}
\newcommand{\Psigmaj}{\mathcal{P}_{\sigma^j}}
\newcommand{\Q}{\mathcal{Q}}
\newcommand{\J}{\{0, \ldots, m-1 \}}
\begin{document}

\title{Invariants and relative invariants \\ under compact Lie groups}

\maketitle

\begin{center}
\begin{tabular}{cc}
{\scshape Patrícia H. Baptistelli} &
{\scshape Miriam Manoel} \\
{\footnotesize {Departamento de Matem\'atica}} & {\footnotesize {Departamento de Matem\'atica, ICMC}} \\
{\footnotesize Universidade Estadual de
Maring\'{a}}  & {\footnotesize {Universidade de S\~ao Paulo}} \\
{\footnotesize {Av. Colombo, 5790, 87020-900}} &  {\footnotesize {13560-970 Caixa
Postal 668,}} \\
{\footnotesize { Maring\'{a}, PR - Brazil }} &  {\footnotesize{S\~ao Carlos, SP - Brazil }} \\
{\footnotesize {phbaptistelli@uem.br }} &  {\footnotesize{miriam@Icmc.usp.br}}

\end{tabular}
\end{center}

\begin{abstract}
This paper presents algebraic methods for the study of polynomial relative invariants,
when the group $\Gamma$ formed by the symmetries and relative symmetries is a compact
Lie group. We deal with the case when the subgroup $H$ of symmetries is normal in $\Gamma$ with index $m,$ $m \geq 2.$
For this, we develop the invariant theory of
compact Lie groups acting on complex vector spaces. This is the starting point for the study of relative invariants and the computation of their generators. We first obtain the space of the invariants under the subgroup $H$ of $\Gamma$ as a direct sum of $m$ submodules over the ring of invariants under the whole group. Then, based on this decomposition, we construct a Hilbert basis of the ring of $\Gamma$-invariants from a Hilbert basis of the ring of $H$-invariants. In both results the knowledge of the relative Reynolds operators defined on $H$-invariants is shown to be an essential tool to obtain the invariants under the whole group. The theory is illustrated with some examples.

\end{abstract}

\noindent \small{ {\bf Keywords.} symmetry, invariant theory, relative invariants, finite index subgroups, Hilbert basis}

\noindent {\it 2000 MSC:} 13A50; 34C14

\section{Introduction}

Many natural phenomena  possess symmetry properties.  The equations describing a physical or biological system may have symmetries as a result of its geometry, modeling assumptions and simplifying normal forms. So when this occurrence is taken into account in the
mathematical formulation of the problem, it can simplify significantly its interpretation. The natural language for describing symmetry properties of a dynamical system is that of group theory. The main point
is that, in this case, the set $\Gamma$ formed by these symmetries has structure of group and the differential equations governing the system
remain unchanged under the action of $\Gamma.$ More specifically, consider the system
\begin{equation} \label{eq:INTRO1}
 \dot{x}~=~G(x)
\end{equation}
defined on a finite-dimensional vector space $V$ of state
variables, where $G:V\to V$ is a smooth vector field. We assume
that $\Gamma$ is a compact Lie group acting orthogonally
on $V.$ If $G$ is {\it purely $\Gamma-$equivariant}, i.e.,
\begin{equation}
\label{equiv}
 G(\gamma x)~=~\gamma \, G(x)~, \quad \forall \gamma \in \Gamma, \ x \in V,
\end{equation}
then $x(t)$ is a solution of (\ref{eq:INTRO1}) if, and only if,
$\gamma x(t)$ is a solution of (\ref{eq:INTRO1}), for all $\gamma \in \Gamma.$
The effect of symmetries in local and global dynamics has been
investigated for decades by a great number of authors in many
papers (an extensive list of references can be found in \cite{chossat})  .

There is a parallel class of equivariant dynamical systems, in the sense of \eqref{equiv}, which are those in simultaneous presence of symmetries and reversing symmetries. A system is said to have a reversing symmetry if the equations
of motion are invariant under that symmetry in combination with the inversion
of time. This contrasts with a symmetry,
which leaves the equations of motion invariant without time inversion. In algebraic terms, this corresponds to considering a group epimorphism
\begin{equation}
\label{sigma}
\sigma:\Gamma\to\Ztwo,
\end{equation}
where $\Ztwo$ denotes the multiplicative group $\{1, -1\}$, whose kernel is the subgroup $\Gamma_+$ of the symmetries in $\Gamma$. The complement $\Gamma_- =
\Gamma\setminus\Gamma_+$ is the set of reversing symmetries. The mapping $G: V \rightarrow V$ is called {\it reversible-equivariant} if
 \begin{equation}
\label{rev-equiv}
 G(\gamma x)~=~\sigma(\gamma)\gamma \, G(x)~, \quad \forall \gamma \in \Gamma, \ x \in V.
\end{equation}
Then $x(t)$ is a solution of (\ref{eq:INTRO1}) if, and only if, $\gamma x(\sigma(\gamma)t)$ is a solution of
(\ref{eq:INTRO1}), for all $\gamma \in \Gamma.$

In \cite{BM1} we have developed an approach based on singularity theory in combination with group representation theory, in the
same lines as Golubitsky \etal~\cite{GS69}, for the systematic analysis of reversible-equivariant steady-state bifurcations.
More recently, we have used in \cite{BM3} techniques of group representation theory to describe the decomposition $\sigma$-isotypic in the reversible-quivariant context, which is the corresponding isotypic decomposition presented in \cite{GS69} in the equivariant context.

The starting point for local or global analysis of reversible-equivariant
systems is to find the general form of the vector field $G$
in \eqref{eq:INTRO1} that satisfies (\ref{rev-equiv}). The theorems by Schwarz and Po\'enaru (see
Golubitsky \etal~\cite{GS69}) reduce this task to a purely
algebraic problem in invariant theory. In \cite{antoneli}, we have established
algebraic results that reduce the search for the general form of the vector field to the polynomial invariant case. Based on a link existent between the invariant theory for $\Gamma$ and for its
subgroup $\Gamma_+,$ we have provided a set of generators for the modules of relative invariants and reversible-equivariants, from generators of the ring of invariants under the action of $\Gamma_+$ on $V.$ We remark that these, in turn, are well known in several important cases, as in \cite{Gat,sturm,Worf}. More specifically, consider an epimorphism $\sigma$ as in (\ref{sigma}). A polynomial function ${f}:V \to \R$ is called   \emph{anti-invariant} if
\begin{equation} \label{eq:ANTIINV}
{f}(\gamma x)~=~\sigma(\gamma) {f}(x)~,
\end{equation}
for all $\gamma \in \Gamma$ and $x \in V$. If $\sigma$ in (\ref{sigma}) were the trivial homomorphism, the function $f$ satisfying (\ref{eq:ANTIINV}) would reduce to a \emph{$\Gamma$-invariant} function, or, as we shall
simply say,  an {\it invariant} function.
The space $\Q(\Gamma)$ of all
anti-invariant polynomial functions is a finitely generated graded module over the ring
$\mathcal{P}(\Gamma)$ of invariant polynomial functions on
$V.$ The ring $\mathcal{P}(\Gamma)$ in turn  admits a Hilbert basis (namely, a finite set of generators), once $\Gamma$ is compact,
according to the Hilbert-Weyl Theorem (see \cite[Theorem XII 4.2]{GS69}).
 In \cite{antoneli} we have obtained the decomposition
\begin{equation} \label{eq:decomposition1}
 \mathcal{P}(\Gamma_{+})~=~\mathcal{P}(\Gamma)
 \oplus \mathcal{Q}(\Gamma)
 \end{equation}
as a direct sum of modules over $\mathcal{P}(\Gamma)$. This decomposition is the basis for a second result,
namely the algorithm that computes generating sets for
anti-invariant polynomials  as a module over the ring of the invariant
polynomials.

In this paper we extend the formalism described above when the set of the symmetries $H$ is a normal
subgroup of index $m$ greater than 2. The functions now are complex-valued and the representation of
$\Gamma$ is a complex representation. We remark that when $m=2$ the existence of the epimorphism (\ref{sigma})
is a natural consequence of the fact that $\Gamma_+$ to have index 2. However, not for all $m \in {\Bbb R}$
there is an epimorphism from $\Gamma$ to $\Zm,$ i.e., the existence of a normal subgroup of arbitrary finite
index with cyclic quotient is not guaranteed. So this is an assumption from now on. Let us just  recall that it does exist if $m$ is prime, for example. Hence, we consider an epimorphism
\begin{equation} \label{eq:HOMOSIGMA}
 \sigma : \Gamma \to \Zm,
\end{equation}
where $\Zm$ denotes the cyclic group generated by the $m$-th root of unity, with $H = \ker(\sigma).$ As a consequence, if we fix $\delta \in \Gamma$ such that $\sigma(\delta) $ is a primitive $m$-th root of unity,
then we have a decomposition of $\Gamma$ as a disjoint union of  left-cosets:
\begin{equation}
\label{eq:left-cosets}
\Gamma = H \ \dot{\cup} \ \delta \ H \ \dot{\cup} \ \delta^2 \ H  \ \dot{\cup} \ \ldots \ \dot{\cup} \ \delta^{m-1} \ H.
\end{equation}

In section 2, we present our first main result, Theorem~\ref{thm:RELATIVEDECOMP},
 which decomposes $\mathcal{P}(H)$
as a direct sum of $m$ submodules over $\mathcal{P}(\Gamma)$. This extends the decomposition
(\ref{eq:decomposition1}) when $m=2$ for the cases  $m>2$. This also extends the analogous result
obtained by
Smith in  \cite{larry} in the case that the group is finite for compact Lie groups.

In section 3 we obtain the other two main results, related to the construction of a Hilbert basis
for the ring $\mathcal{P}(\Gamma)$.
One is  Theorem~\ref{thm:MAIN1} for $m=2$ and the other is
Theorem~\ref{thm:MAIN2} for $m>2$, which are  applications of the decomposition
given in section 2. The result is to construct a Hilbert basis of the ring $\mathcal{P}(\Gamma)$ from a
Hilbert basis of the ring $\mathcal{P}(H)$.
The main idea of this construction is to make use of relative Reynolds operators defined on  $\mathcal{P}(H)$ that project onto each of its $m$ components. We point out that the interest of this
result is that from the knowledge of the invariants just under the symmetries in $\Gamma$ we can recover
the generators of the invariants under the whole group.

\section{The Structure of the $\sigma$-Relative Invariants} \label{sec:RELATIVEINV}
In this section we give some basic concepts and obtain
results about the structure of the set of relative invariant
functions.

\subsection{The Representation Theory}
Let $\Gamma$ be a compact Lie group acting linearly on a
finite-dimensional vector space $V$ over $\C.$ This action corresponds to
a representation $\rho$ of the group $\Gamma$ on $V$ through a
linear homomorphism from $\Gamma$ to the group $\GL(V)$ of
invertible linear transformations on $V$. We denote by $(\rho,V)$
the vector space $V$ together with the linear action of $\Gamma$
induced by $\rho$. We often write $\gamma x$ for the action of the
linear transformation $\rho(\gamma)$ of an element $\gamma \in
\Gamma$ on a vector $x \in V$, whenever there is no danger of
confusion.

Consider an epimorphism $\sigma : \Gamma \to \Zm.$ We notice that $\sigma$ defines a one-dimensional representation of
$\Gamma$ on $\C$, by $z \mapsto \sigma(\gamma)z$ for $z \in \C$.
Also, the character afforded by  $\sigma$ is the complex-valued function whose value on
$\gamma \in \Gamma$ is $\sigma(\gamma)$; in other words, it is
$\sigma$ itself, but regarded as a complex-valued function, which we
shall also denote by $\sigma$.

\begin{definition}
For each $j \in \J,$ a polynomial function $f: V \to \C$ is called
$\sigma^j$-relative invariant if
\begin{equation} \label{eq:SIGMARELATIVEINV}
f(\gamma x)~=~\sigma^j(\gamma) f(x)
\end{equation}
for all $\gamma \in \Gamma$ and $x \in V$.  \END
\end{definition}

The set of the $\sigma^j$-relative invariant polynomial functions is a module over
$\mathcal{P}(\Gamma)$ and we shall denoted it by $\Psigmaj(\Gamma)$. Notice that we can regard a $\sigma^j$-relative invariant polynomial function on $V$ as an equivariant mapping  from $(\rho,V)$ to $(\sigma^j,\C)$. Therefore, the existence of a finite generating set
for $\Psigmaj(\Gamma)$ is guaranteed by Po\'enaru's Theorem (see \cite[Theorem XII 6.8]{GS69}).

The next lemma gives a relation between the $\sigma$-relative invariants and the invariants under the action of the subgroup $H = \ker \sigma.$ This result is immediate from the definitions.

\begin{lemma}
\label{lem:CARACTERIZATION}
Let $\sigma$ be an epimorphism as in \eqref{eq:HOMOSIGMA}, with kernel $H,$ and fix $\delta \in \Gamma$ such that $\sigma(\delta)$ is the primitive $m$-th root of unity. Then for each $j \in \J,$ we have
\[ \Psigmaj(\Gamma)~=~\{f \in \mathcal{P}(H):
 f(\delta x) = \sigma^j(\delta)f(x) \,,\;\forall\: x \in V\}.\]
\end{lemma}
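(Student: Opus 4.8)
The plan is to verify the two inclusions directly, using only the defining property \eqref{eq:SIGMARELATIVEINV} together with the coset decomposition \eqref{eq:left-cosets}; as the text already remarks, the result is essentially immediate.

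First I would establish the inclusion $\Psigmaj(\Gamma)\subseteq\{f\in\mathcal{P}(H):f(\delta x)=\sigma^j(\delta)f(x)\}$. Suppose $f$ satisfies $f(\gamma x)=\sigma^j(\gamma)f(x)$ for all $\gamma\in\Gamma$ and $x\in V$. For $h\in H=\ker\sigma$ we have $\sigma(h)=1$, hence $\sigma^j(h)=1$ and $f(hx)=f(x)$, so $f\in\mathcal{P}(H)$; specializing $\gamma=\delta$ gives the remaining condition $f(\delta x)=\sigma^j(\delta)f(x)$.

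For the reverse inclusion, let $f\in\mathcal{P}(H)$ satisfy $f(\delta x)=\sigma^j(\delta)f(x)$ for all $x\in V$. A short induction on $k$ (composing the hypothesis with the linear action of $\delta$) yields $f(\delta^k x)=\sigma^j(\delta)^k f(x)=\sigma^j(\delta^k)f(x)$ for all $k\ge 0$ and all $x\in V$. Now fix an arbitrary $\gamma\in\Gamma$; by \eqref{eq:left-cosets} we may write $\gamma=\delta^k h$ with $k\in\{0,\ldots,m-1\}$ and $h\in H$. Using the $H$-invariance of $f$ and the inductive identity,
\[
f(\gamma x)=f\bigl(\delta^k(hx)\bigr)=\sigma^j(\delta^k)f(hx)=\sigma^j(\delta^k)f(x).
\]
Since $h\in\ker\sigma$, we have $\sigma^j(\delta^k)=\sigma^j(\delta^k)\sigma^j(h)=\sigma^j(\delta^k h)=\sigma^j(\gamma)$, so $f(\gamma x)=\sigma^j(\gamma)f(x)$, i.e.\ $f\in\Psigmaj(\Gamma)$.

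There is no genuine obstacle here; the only point deserving a moment's attention is that $\sigma^j$ restricts trivially to $H$, so that the chosen coset representative $\delta^k$ already determines $\sigma^j(\gamma)$, and that the polynomial character of $f$ is preserved throughout, since at every stage we only compose $f$ with the linear actions of fixed elements of $\Gamma$.
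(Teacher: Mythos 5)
Your proof is correct and is exactly the argument the paper has in mind: the paper omits the proof entirely, remarking only that the result "is immediate from the definitions," and your two inclusions (restricting to $H$ and to $\gamma=\delta$ in one direction; using the coset decomposition $\gamma=\delta^k h$ and the triviality of $\sigma^j$ on $H$ in the other) are the standard way to make that remark precise.
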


\subsection{The Application of the Invariant Theory}

The aim here is to relate the rings of invariants under the action of $\Gamma$ and $H.$  The modules of
$\sigma^j$-relative invariants presented in the previous subsection play a fundamental role in this construction. The result is Theorem~\ref{thm:RELATIVEDECOMP}, with which we
end the subsection.

We start by introducing the \emph{$\sigma^j$-relative Reynolds operators} on
$\mathcal{P}(H)$,
$R_j\!:\mathcal{P}(H) \to
\mathcal{P}(H)$, by
\[
 R_j(f)(x)~=~\frac{1}{m}
 \sum_{\;\gamma H} \overline{\sigma^j(\gamma)} f(\gamma x),\] for each $j \in \J$, where bar means the complex conjugation. So
\begin{equation}
 \begin{split} \label{eq:REYNOLDS2}
 R_j(f)(x)~
 & =~\frac{1}{m}
 \big(f(x)+\overline{\sigma^j(\delta)}f(\delta x) + \ldots + \overline{\sigma^j(\delta^{m-1})}f(\delta^{m-1} x)  \big)\\
 & = \frac{1}{m}
 \sum_{k=0}^{m-1} \overline{\sigma^{jk}(\delta)} f(\delta^k x),
 \end{split}
\end{equation}
for an arbitrary (but fixed) $\delta \in \Gamma$ such that $\sigma(\delta)$ is the primitive $m$-th root of unity.

We notice that for $m=2,$ the Reynolds operator $R^\Gamma_{\Gamma_+}$ and the Reynolds
$\sigma$-operator $S^\Gamma_{\Gamma_+}$ defined in \cite{antoneli} correspond to $R_0$ and $R_1,$ respectively. There, the operator $R_1$ has been used to produce a generating set for the anti-invariant polynomial functions as a module over the ring of the invariants.

For the next proposition we recall that, for $\delta \in \Gamma \setminus H$ such that $\delta^m \in H, $
\begin{equation} \label{eq:ROOTUNITY}
\sum_{i = 0}^{m-1}\sigma^{ik}(\delta)= 1 + \sigma^k(\delta) + \sigma^{2k}(\delta) + \ldots + \sigma^{(m-1)k}(\delta) = 0,
\end{equation}
for all $k \in \{1, \ldots, m-1\}.$

Now let us denote by $I_{\mathcal{P}(H)}$ the identity map on $\mathcal{P}(H)$.  We then have:

\begin{proposition} \label{prop:PROPERTIES1}
For each $j \in \J,$ the Reynolds operators $R_j$ satisfy the
following properties:
\begin{enumerate}[(i)]
 \item They are homomorphisms of $\mathcal{P}(\Gamma)$-modules and
\begin{equation} \label{eq:MAPSUM1}
        R_0 + R_1 + \ldots + R_{m-1}~=~I_{\mathcal{P}(H)}~.
\end{equation}
 \item They are idempotent projections, with $\IM(R_j)~=~\Psigmaj(\Gamma).$
 \item For all $1 \leq j \leq m-1,$ we have \[\mathcal{P}_{\sigma^j}(\Gamma) \cap \bigl(\mathcal{P}(\Gamma) + \ldots + \mathcal{P}_{\sigma^{j-1}}(\Gamma)\bigr) = \{0\}.\]
 \end{enumerate}
\end{proposition}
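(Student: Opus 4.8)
The plan is to establish the three items in order, using throughout the closed formula \eqref{eq:REYNOLDS2} for $R_j$, the facts that $\sigma(\delta)$ is a primitive $m$-th root of unity (so $\overline{\sigma(\delta)}=\sigma(\delta)^{-1}$ and $\sigma(\delta)^m=1$), and the orthogonality relation \eqref{eq:ROOTUNITY}.

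For (i), first I would check that $R_j$ indeed sends $\mathcal{P}(H)$ into $\mathcal{P}(H)$: each summand $f(\delta^k x)$ is polynomial in $x$, and for $h\in H$ the normality of $H$ gives $\delta^k h=h'\delta^k$ with $h'\in H$, so $f(\delta^k h x)=f(\delta^k x)$ and $R_j(f)$ is $H$-invariant. Additivity of $R_j$ is immediate, and $\mathcal{P}(\Gamma)$-linearity follows since any $p\in\mathcal{P}(\Gamma)$ satisfies $p(\delta^k x)=p(x)$ and hence factors out of the sum. For \eqref{eq:MAPSUM1} I would sum \eqref{eq:REYNOLDS2} over $j\in\J$, interchange the two finite sums, and evaluate $\sum_{j=0}^{m-1}\overline{\sigma^{jk}(\delta)}$, which equals $m$ for $k=0$ and $0$ for $k\in\{1,\dots,m-1\}$ by \eqref{eq:ROOTUNITY}; only the $k=0$ term survives, leaving $f(x)$.

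For (ii) the main point is $\IM(R_j)=\Psigmaj(\Gamma)$. For the inclusion $\IM(R_j)\subseteq\Psigmaj(\Gamma)$, by Lemma~\ref{lem:CARACTERIZATION} and the fact (just proved) that $R_j(f)\in\mathcal{P}(H)$, it is enough to verify $R_j(f)(\delta x)=\sigma^j(\delta)R_j(f)(x)$: substituting $\delta x$ into \eqref{eq:REYNOLDS2}, re-indexing $k\mapsto k+1$, using $\delta^m\in H$ so that $f(\delta^m x)=f(x)$, and rewriting $\overline{\sigma^{jk}(\delta)}=\sigma^j(\delta)\,\overline{\sigma^{j(k+1)}(\delta)}$, one pulls the scalar $\sigma^j(\delta)$ out and recovers $R_j(f)(x)$. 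Conversely, if $f\in\Psigmaj(\Gamma)$ then $f(\delta^k x)=\sigma^{jk}(\delta)f(x)$, so every term of \eqref{eq:REYNOLDS2} equals $\frac1m|\sigma^{jk}(\delta)|^2 f(x)=\frac1m f(x)$ and $R_j(f)=f$; thus $R_j$ restricts to the identity on $\Psigmaj(\Gamma)$, which both gives the reverse inclusion and, together with $\IM(R_j)\subseteq\Psigmaj(\Gamma)$, shows $R_j\circ R_j=R_j$, i.e.\ $R_j$ is an idempotent projection onto $\Psigmaj(\Gamma)$.

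For (iii), let $g\in\mathcal{P}_{\sigma^j}(\Gamma)\cap\bigl(\mathcal{P}(\Gamma)+\dots+\mathcal{P}_{\sigma^{j-1}}(\Gamma)\bigr)$ and write $g=g_0+\dots+g_{j-1}$ with $g_i\in\mathcal{P}_{\sigma^i}(\Gamma)$. Applying $R_j$ and using (ii) gives $g=R_j(g)=\sum_{i=0}^{j-1}R_j(g_i)$; but for each $i\in\J$ with $i\ne j$ one computes $R_j(g_i)(x)=\frac1m\bigl(\sum_{k=0}^{m-1}\sigma^{(i-j)k}(\delta)\bigr)g_i(x)$, which vanishes since $\sigma^{i-j}(\delta)$ is an $m$-th root of unity different from $1$ (cf.\ \eqref{eq:ROOTUNITY}). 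Hence $g=0$. I do not expect a genuine obstacle here; the only steps requiring care are the well-definedness of $R_j$ on $\mathcal{P}(H)$ (the single place where normality of $H$ is used) and the re-indexing book-keeping in the proof of (ii).
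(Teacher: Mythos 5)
Your proof is correct. Parts (i) and (ii) follow essentially the same route as the paper --- the paper in fact leaves implicit the point you make explicit, namely that $R_j$ lands in $\mathcal{P}(H)$ at all, which is exactly where normality of $H$ enters; spelling that out is a genuine improvement rather than a deviation. Part (iii) is where you genuinely diverge. The paper argues by induction on $j$: it writes $f=h_0+\dots+h_s$ with $h_i\in\mathcal{P}_{\sigma^i}(\Gamma)$, evaluates at $\delta x$ to get $\sum_i(\sigma^{s+1}(\delta)-\sigma^i(\delta))h_i=0$, and then invokes the inductive hypothesis (that the sum $\mathcal{P}(\Gamma)+\dots+\mathcal{P}_{\sigma^s}(\Gamma)$ is already direct) to conclude each coefficiented term vanishes separately, whence $h_i\equiv 0$ since $\sigma^{s+1}(\delta)\neq\sigma^i(\delta)$. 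You instead apply the projection $R_j$ to $g=g_0+\dots+g_{j-1}$ and kill each $R_j(g_i)$, $i\neq j$, by the orthogonality relation $\sum_{k=0}^{m-1}\sigma^{(i-j)k}(\delta)=0$ (valid since $\sigma^{i-j}(\delta)$ is an $m$-th root of unity different from $1$, reducing $i-j$ mod $m$ if needed). Your argument is shorter, needs no induction, and in fact proves the stronger statement that the $m$ submodules are pairwise ``orthogonal'' under the projections $R_j$, which is morally the cleanest way to see Theorem~\ref{thm:RELATIVEDECOMP}; the paper's induction buys nothing extra here. Both are valid.
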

\begin{proof}
To prove {\it({i})} we just note that if $h,g \in
\mathcal{P}(H)$ and $f \in \mathcal{P}(\Gamma)$, then
\[
 R_j(h+g)~=~R_j(h) + R_j(g)
 \qquad\text{and}\qquad
 R_j(f\,h)~=~f\,R_j(h),
\] for all $j \in \J.$
Moreover,  (\ref{eq:MAPSUM1})  follows directly from
(\ref{eq:REYNOLDS2}) and (\ref{eq:ROOTUNITY}).

To prove {\it (ii)} and {\it (iii)}, we fix $\delta \in \Gamma$ such that $\sigma(\delta)$ is the primitive $m$-th root of unity. For any $f \in\mathcal{P}(H)$, it is direct from (\ref{eq:REYNOLDS2}) and Lemma \ref{lem:CARACTERIZATION} that
$R_j(f) \in \Psigmaj(\Gamma).$  Hence,  $\IM(R_j) \subseteq \Psigmaj(\Gamma).$
The other inclusion is immediate, since $R_j |_{\Psigmaj(\Gamma)}=I_{\Psigmaj(\Gamma)}.$ From these, it is straightforward that
$R_j$ is idempotent.

We prove {\it (iii)} by induction on $j.$ It is easy to see that $\mathcal{P}_{\sigma}(\Gamma) \cap \mathcal{P}(\Gamma) = \{0\}.$ Suppose now that given $1 \leq s \leq m-2,$ we have
\begin{equation} \label{eq:INDUCTION}
\mathcal{P}_{\sigma^j}(\Gamma) \cap \bigl(\mathcal{P}(\Gamma) + \ldots + \mathcal{P}_{\sigma^{j-1}}(\Gamma)\bigr) = \{0\},
\end{equation}
for all $1 \leq j \leq s.$ We show that (\ref{eq:INDUCTION}) is true for $s+1.$ In fact, let $f \in \mathcal{P}_{\sigma^{s+1}}(\Gamma) \cap \bigl(\mathcal{P}(\Gamma) + \ldots + \mathcal{P}_{\sigma^s}(\Gamma)\bigr).$ Then
\[f(\delta x) = \sigma^{s+1}(\delta)f(x) \qquad \text{and} \qquad f(x) = h_0(x) + \ldots + h_s(x),\] with $h_j \in \Psigmaj(\Gamma).$ So
\[(\sigma^{s+1}(\delta) - 1)h_0(x) + \ldots + (\sigma^{s+1}(\delta) - \sigma^s(\delta))h_s(x) = 0,\] for all $x \in V.$ By the induction hypothesis $(\sigma^{s+1}(\delta) - \sigma^j(\delta))h_j(x) = 0,$ for all $0 \leq j \leq s.$ It follows that $h_j \equiv 0,$ for all $0 \leq j \leq s,$ as desired. \qed
\end{proof}\\

We are now in position to state the following theorem, our first main result, which is now an immediate consequence of the proposition above.

\begin{theorem} \label{thm:RELATIVEDECOMP}
The following direct sum decomposition of $\mathcal{P}(\Gamma)$-modules holds:
\[
 \mathcal{P}(H)~=~\mathcal{P}(\Gamma) \oplus \Psigma(\Gamma)~\oplus~\ldots~\oplus \mathcal{P}_{\sigma^{m-1}}(\Gamma).
\]
\end{theorem}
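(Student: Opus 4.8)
The plan is to derive Theorem~\ref{thm:RELATIVEDECOMP} directly from Proposition~\ref{prop:PROPERTIES1} by verifying the two defining conditions of an (internal) direct sum: that the submodules $\mathcal{P}(\Gamma), \Psigma(\Gamma), \ldots, \mathcal{P}_{\sigma^{m-1}}(\Gamma)$ together span $\mathcal{P}(H)$, and that their sum is direct. Throughout, I note that each $\Psigmaj(\Gamma)$ is indeed a $\mathcal{P}(\Gamma)$-submodule of $\mathcal{P}(H)$: the containment $\Psigmaj(\Gamma) \subseteq \mathcal{P}(H)$ is clear since $H = \ker\sigma$ forces $\sigma^j(\gamma) = 1$ for $\gamma \in H$, and closure under multiplication by $\mathcal{P}(\Gamma)$ is immediate from \eqref{eq:SIGMARELATIVEINV}. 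Note also that $\mathcal{P}(\Gamma) = \mathcal{P}_{\sigma^0}(\Gamma)$ is the $j=0$ term.

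\smallskip

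\textbf{Step 1 (spanning).} Given any $f \in \mathcal{P}(H)$, apply the identity \eqref{eq:MAPSUM1} from Proposition~\ref{prop:PROPERTIES1}(i) to write
\[
 f~=~R_0(f) + R_1(f) + \ldots + R_{m-1}(f).
\]
By Proposition~\ref{prop:PROPERTIES1}(ii), each term $R_j(f)$ lies in $\IM(R_j) = \Psigmaj(\Gamma)$. Hence $f$ is expressed as a sum of elements, one from each submodule, which proves
\[
 \mathcal{P}(H)~=~\mathcal{P}(\Gamma) + \Psigma(\Gamma) + \ldots + \mathcal{P}_{\sigma^{m-1}}(\Gamma).
\]

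\smallskip

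\textbf{Step 2 (directness).} To show the sum is direct, I use the standard criterion: it suffices to check that for each $j$ with $1 \leq j \leq m-1$,
\[
 \mathcal{P}_{\sigma^j}(\Gamma) \cap \bigl(\mathcal{P}(\Gamma) + \Psigma(\Gamma) + \ldots + \mathcal{P}_{\sigma^{j-1}}(\Gamma)\bigr)~=~\{0\},
\]
and this is exactly Proposition~\ref{prop:PROPERTIES1}(iii). Combined with Step 1, this establishes that every $f \in \mathcal{P}(H)$ has a \emph{unique} decomposition as a sum of components from the $m$ submodules, which is precisely the assertion that $\mathcal{P}(H)$ is the internal direct sum $\mathcal{P}(\Gamma) \oplus \Psigma(\Gamma) \oplus \ldots \oplus \mathcal{P}_{\sigma^{m-1}}(\Gamma)$ as $\mathcal{P}(\Gamma)$-modules.

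\smallskip

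Since Proposition~\ref{prop:PROPERTIES1} has already been proved, there is no real obstacle remaining; the only point requiring a moment's care is recalling the equivalence between the ``nested intersection'' form of directness used in part (iii) and the usual uniqueness-of-decomposition formulation, but this is a routine fact about direct sums of modules. The substantive content — that the relative Reynolds operators $R_j$ are $\mathcal{P}(\Gamma)$-module projections summing to the identity with the correct images, and that the partial sums meet each subsequent summand trivially — is entirely contained in the proposition, so the theorem follows at once.
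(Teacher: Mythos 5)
Your proposal is correct and is exactly the argument the paper intends: the authors state the theorem as an immediate consequence of Proposition~\ref{prop:PROPERTIES1}, with spanning coming from \eqref{eq:MAPSUM1} together with $\IM(R_j)=\Psigmaj(\Gamma)$, and directness from part (iii). You have simply written out the routine details that the paper omits.
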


\section{Hilbert basis for the $\Gamma$-invariants}

\label{sec:ALGRELATINV}

In  this section we show how to compute generators for $\mathcal{P}(\Gamma),$ as a ring, from a Hilbert basis of $\mathcal{P}(H),$ where $H$ is a normal subgroup of index $m$ of $\Gamma.$ The basic idea here is to take advantage of the direct sum decomposition from Theorem \ref{thm:RELATIVEDECOMP} to transfer generating sets from one ring to the other. The procedure is algorithmic and it holds, as its proof, for
any $m$. However, the case $m=2$ is special in the sense that a distinguished proof reveals a class of
invariants and anti-invariants that does not appear in the proof for $m>2$.  For this reason, we present
the two cases separately, as Theorems \ref{thm:MAIN1} and \ref{thm:MAIN2}.

We start with index $m=2.$ For this case, if $u: V \rightarrow \R$ and $\gamma \in \Gamma,$ we denote by $\gamma u: V \rightarrow \R$ the polynomial function defined by
\[\gamma u (x) : = u( \gamma x), \quad \forall x \in V.\]

\begin{lemma} \label{lemma:MAIN1}
Let $u: V \rightarrow \R$ be a real-valued function. Then for all $t \in \N,$
\begin{equation}
\label{eq:PROPERTIE1}
u^t + \delta u^t = F_t\bigl(R_0(u),(R_1(u))^2\bigr),
\end{equation}
\begin{equation}
\label{eq:PROPERTIE2}
\sum_{k=0}^{t}u^k \delta u^{t-k} = G_t\bigl(R_0(u),(R_1(u))^2\bigr),\end{equation}
and
\begin{equation}
\label{eq:PROPERTIE3}
u^t - \delta u^t = R_1(u)H_t\bigl(R_0(u),(R_1(u))^2\bigr),
\end{equation}

\noindent for $F_t,G_t,H_t: \R^2 \rightarrow \R$ polynomial functions.
\end{lemma}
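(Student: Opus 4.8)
The plan is to prove all three identities simultaneously by induction on $t$, exploiting the fact that for $m=2$ we have $\delta^2 \in H$, so $\delta(\delta u) = \delta^2 u$ and, more importantly, the decomposition $u = R_0(u) + R_1(u)$ with $\delta R_0(u) = R_0(u)$ and $\delta R_1(u) = -R_1(u)$. Writing $a = R_0(u)$ and $b = R_1(u)$, so that $u = a+b$ and $\delta u = a - b$, every expression on the left-hand side becomes a concrete polynomial in $a$ and $b$, and the content of the lemma is simply that the symmetric combinations in \eqref{eq:PROPERTIE1} and \eqref{eq:PROPERTIE2} are polynomials in $a$ and $b^2$, while the antisymmetric combination in \eqref{eq:PROPERTIE3} is $b$ times a polynomial in $a$ and $b^2$.

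First I would establish the base cases: for $t=0$ (or $t=1$), \eqref{eq:PROPERTIE1} reads $u^0 + \delta u^0 = 2 = F_0$, \eqref{eq:PROPERTIE2} reads $u^0\delta u^0 = 1 = G_0$, and \eqref{eq:PROPERTIE3} reads $u^0 - \delta u^0 = 0 = R_1(u) \cdot 0$; and for $t=1$, $u + \delta u = 2a = F_1(a,b^2)$, $u^0\delta u + u \delta u^0 = 2a$ again (being careful with the indexing in \eqref{eq:PROPERTIE2}), and $u - \delta u = 2b = R_1(u) \cdot 2$. Then for the inductive step I would use the substitution $u = a+b$, $\delta u = a-b$ directly: expand $u^t + \delta u^t = (a+b)^t + (a-b)^t$, which by the binomial theorem equals $2\sum_{i \text{ even}} \binom{t}{i} a^{t-i} b^i$, manifestly a polynomial in $a$ and $b^2$; this proves \eqref{eq:PROPERTIE1} outright without even needing induction. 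Similarly $u^t - \delta u^t = (a+b)^t - (a-b)^t = 2\sum_{i \text{ odd}} \binom{t}{i} a^{t-i} b^i = 2b\sum_{j} \binom{t}{2j+1} a^{t-1-2j} b^{2j}$, which is $R_1(u)$ times a polynomial in $a$ and $b^2$, proving \eqref{eq:PROPERTIE3}. For \eqref{eq:PROPERTIE2}, I would note that $\sum_{k=0}^t u^k (\delta u)^{t-k}$ is the complete homogeneous symmetric polynomial of degree $t$ in the two variables $u$ and $\delta u$; since $u + \delta u = 2a$ and $u \cdot \delta u = a^2 - b^2$ are both polynomials in $a$ and $b^2$, and every symmetric polynomial in two variables is a polynomial in their elementary symmetric functions, the claim follows. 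Alternatively one gets a clean recursion $\sum_{k=0}^t u^k(\delta u)^{t-k} = (u+\delta u)\sum_{k=0}^{t-1}u^k(\delta u)^{t-1-k} - u\delta u \sum_{k=0}^{t-2}u^k(\delta u)^{t-2-k}$, i.e. $G_t = 2a\,G_{t-1} - (a^2-b^2)G_{t-2}$, which makes the induction transparent.

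The only genuine subtlety — and the one place I would be careful — is the hypothesis that $u$ is \emph{real-valued}. This is what guarantees $\overline{\sigma(\gamma)} = \sigma(\gamma)$ is real and hence that $R_0(u) = \tfrac12(u + \delta u)$ and $R_1(u) = \tfrac12(u - \delta u)$ are themselves real-valued, so the algebra above takes place over $\R$ and the resulting $F_t, G_t, H_t$ are bona fide real polynomial functions $\R^2 \to \R$ as claimed; this also uses $\sigma(\delta) = -1$ for the index-$2$ case, so that $\overline{\sigma(\delta)} = -1$ and the formulas for $R_0(u)$, $R_1(u)$ are exactly the symmetric/antisymmetric parts. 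Beyond this bookkeeping, there is no real obstacle: the lemma is essentially the statement that the ring of polynomials in $u, \delta u$ invariant under the swap $u \leftrightarrow \delta u$ is generated by $u + \delta u$ and $(u - \delta u)^2$ together with the "odd part" $u - \delta u$, rephrased through the Reynolds operators. I would present it via the binomial expansion for \eqref{eq:PROPERTIE1} and \eqref{eq:PROPERTIE3} and via the recursion $G_t = 2R_0(u)G_{t-1} - \bigl((R_0(u))^2 - (R_1(u))^2\bigr)G_{t-2}$ for \eqref{eq:PROPERTIE2}, which keeps the write-up short.
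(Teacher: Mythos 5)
Your proof is correct, and it is a close cousin of the paper's argument rather than a verbatim match. Both rest on the same underlying facts: for $m=2$ one has $R_0(u)=\tfrac12(u+\delta u)$ and $R_1(u)=\tfrac12(u-\delta u)$, hence $u+\delta u=2R_0(u)$, $u\,\delta u=(R_0(u))^2-(R_1(u))^2$ and $u-\delta u=2R_1(u)$ are the quantities that must be expressed through $R_0(u)$ and $(R_1(u))^2$. The paper proves \eqref{eq:PROPERTIE1} and \eqref{eq:PROPERTIE2} by induction on $t$ using the recursions $u^i+\delta u^i=(u+\delta u)(u^{i-1}+\delta u^{i-1})-(u\,\delta u)(u^{i-2}+\delta u^{i-2})$ and $\sum_{k=0}^{i}u^k\delta u^{i-k}=(u^i+\delta u^i)+(u\,\delta u)\sum_{k=0}^{i-2}u^k\delta u^{i-2-k}$, and then derives \eqref{eq:PROPERTIE3} by factoring $u^t-\delta u^t=(u-\delta u)\sum_{k=0}^{t-1}u^k\delta u^{t-1-k}$ and invoking \eqref{eq:PROPERTIE2}. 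You instead substitute $u=a+b$, $\delta u=a-b$ with $a=R_0(u)$, $b=R_1(u)$, obtain \eqref{eq:PROPERTIE1} and \eqref{eq:PROPERTIE3} in closed form from the binomial theorem (parity of the exponent of $b$ doing all the work), and handle \eqref{eq:PROPERTIE2} either via the fundamental theorem of symmetric polynomials in the two variables $u,\delta u$ or via the equivalent recursion $G_t=2aG_{t-1}-(a^2-b^2)G_{t-2}$, which is the same device as the paper's. What your route buys is explicitness: it produces $F_t$ and $H_t$ directly and makes visible the conversion step --- that $u+\delta u$, $u\,\delta u$ and $(u-\delta u)^2$ are polynomials in $R_0(u)$ and $(R_1(u))^2$ --- which the paper's proof uses but never writes down. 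Your closing remark about real-valuedness is a reasonable but inessential aside; the three identities are purely formal and hold for any function $u$.
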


\begin{proof} We check (\ref{eq:PROPERTIE1}) and (\ref{eq:PROPERTIE2}) by induction on $t,$ by using the polynomial
identity
\[
 u^i + \delta u^i~=~(u + \delta u)(u^{i-1} + \delta u^{i-1}) - (u \ \delta u)(u^{i-2} + \delta u^{i-2})\] in the first case and the identity
 \[\sum_{k=0}^{i}u^k \delta u^{i-k} = (u^i + \delta u^i) +
(u \ \delta u )\sum_{k=0}^{i-2}u^k \delta u^{i-2-k}\] in the second case.

To prove (\ref{eq:PROPERTIE3}) just write
\[u^t - \delta u^t = (u - \delta u)\bigl(\sum_{k=0}^{t-1}u^k \delta u^{t-1-k}\bigr)\] and use (\ref{eq:PROPERTIE2}). \qed
\end{proof}

\begin{theorem}
\label{thm:MAIN1}
Let $\sigma: \Gamma \rightarrow \Ztwo$ be an epimorphism. Let $\{u_1,\ldots,u_s\}$ be a
Hilbert basis of the ring $\mathcal{P}(\Gamma_{+}),$ where $\Gamma_{+}$ is the kernel of $\sigma$.
Then the set
\[\{R_0(u_i), R_1(u_i)R_1(u_j), \ 1 \leq i,j \leq s \}\]
forms a Hilbert basis of the ring $\mathcal{P}(\Gamma).$
\end{theorem}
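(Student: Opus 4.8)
The plan is to show two inclusions between the ring generated by the proposed set and $\mathcal{P}(\Gamma)$. The easy direction is that every element of the proposed set lies in $\mathcal{P}(\Gamma)$: each $R_0(u_i)$ is $\sigma^0$-relative invariant, i.e. a genuine $\Gamma$-invariant, by Proposition~\ref{prop:PROPERTIES1}(ii), and each product $R_1(u_i)R_1(u_j)$ is a product of two anti-invariants, hence $\Gamma$-invariant since $\sigma(\gamma)^2 = 1$. So the subring they generate sits inside $\mathcal{P}(\Gamma)$.

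For the reverse inclusion, let $f \in \mathcal{P}(\Gamma) \subseteq \mathcal{P}(\Gamma_+)$. Since $\{u_1,\ldots,u_s\}$ is a Hilbert basis of $\mathcal{P}(\Gamma_+)$, I can write $f$ as a polynomial in the $u_i$, hence as a $\R$-linear combination of monomials $u_1^{a_1}\cdots u_s^{a_s}$. The first key step is to reduce a single power $u_i^t$: using $u_i = R_0(u_i) + R_1(u_i)$ (which is the case $m=2$ of \eqref{eq:MAPSUM1}) and the binomial expansion, or more directly writing $u_i^t = \tfrac12(u_i^t + \delta u_i^t) + \tfrac12(u_i^t - \delta u_i^t)$ and invoking Lemma~\ref{lemma:MAIN1}, I get
\[
u_i^t = \tfrac12 F_t\bigl(R_0(u_i),(R_1(u_i))^2\bigr) + \tfrac12 R_1(u_i) H_t\bigl(R_0(u_i),(R_1(u_i))^2\bigr),
\]
so $u_i^t$ is a polynomial in $R_0(u_i)$ and $(R_1(u_i))^2 = R_1(u_i)R_1(u_i)$, plus $R_1(u_i)$ times such a polynomial. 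The second step is to handle a general monomial in several variables: expand each $u_i = R_0(u_i) + R_1(u_i)$, multiply out, and collect terms by how many factors of the form $R_1(u_i)$ appear. An even number of $R_1$-factors can be grouped into pairs $R_1(u_i)R_1(u_j)$, each of which is in our generating set; an odd number of $R_1$-factors leaves, after pairing, exactly one leftover $R_1(u_k)$. Thus every monomial in the $u_i$ — and hence $f$ — is a polynomial in $\{R_0(u_i), R_1(u_i)R_1(u_j)\}$ plus a sum of terms each of which is a single $R_1(u_k)$ times such a polynomial.

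The final step is to kill the leftover odd terms. Since $f$ is $\Gamma$-invariant, $\delta f = f$, so $f = \tfrac12(f + \delta f)$; equivalently, applying $R_0$ to the expression for $f$ fixes $f$ and annihilates every term containing an odd number of $R_1$-factors (because $R_0(R_1(u_k) \cdot p) = 0$ whenever $p \in \mathcal{P}(\Gamma)$, as $R_1(u_k)p$ is anti-invariant and $R_0$ projects onto invariants, by Proposition~\ref{prop:PROPERTIES1}(ii) together with (iii) giving $\mathcal{P}_\sigma(\Gamma) \cap \mathcal{P}(\Gamma) = \{0\}$). Since $R_0$ is a $\mathcal{P}(\Gamma)$-module homomorphism and fixes each $R_0(u_i)$ and each $R_1(u_i)R_1(u_j)$ (the latter being $\Gamma$-invariant), applying $R_0$ to the whole expression leaves precisely the even part, which is already a polynomial in our generating set, and equals $f$. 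This proves $f$ lies in the subring generated by $\{R_0(u_i), R_1(u_i)R_1(u_j)\}$, completing the argument.

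I expect the main obstacle to be purely bookkeeping: correctly tracking the parity of the number of $R_1$-factors across an arbitrary monomial expansion and making the pairing $R_1(u_i)R_1(u_j)$ precise, so that the "odd leftover" is genuinely a single factor times an element we already control. Lemma~\ref{lemma:MAIN1} does the one-variable case cleanly, and the multivariable reduction is the natural extension; the only subtlety is ensuring the coefficients stay in $\R$ and that invoking $R_0$ at the end is legitimate — which it is, since $R_0$ is $\mathcal{P}(\Gamma)$-linear and idempotent onto $\mathcal{P}(\Gamma)$.
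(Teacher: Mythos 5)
Your proposal is correct, and it takes a genuinely different and in fact more economical route than the paper. The paper fixes $\delta\in\Gamma\setminus\Gamma_+$, writes $f=\sum_\alpha a_\alpha(u^\alpha+\delta u^\alpha)$, and then runs a double induction: an outer induction on the cardinality $s$ of the Hilbert basis (with Lemma~\ref{lemma:MAIN1} handling the one--variable base case via the Newton-type recursions for $u^t\pm\delta u^t$) and an inner induction establishing that $u^\alpha-\delta u^\alpha$ is a combination $\sum_j R_1(u_j)H_{\alpha,j}(\cdot)$; the $\pm\delta$-symmetrization there is exactly your parity split in disguise. You instead substitute $u_i=R_0(u_i)+R_1(u_i)$ into an arbitrary monomial, expand, sort terms by the parity of the number of $R_1$-factors, and then use that $R_0$ is an idempotent $\mathcal{P}(\Gamma)$-module projection (Proposition~\ref{prop:PROPERTIES1}) which fixes $f$ and kills the odd part, since $R_1(u_k)\,p$ is anti-invariant for $p\in\mathcal{P}(\Gamma)$ and $R_0$ vanishes on $\mathcal{P}_\sigma(\Gamma)$. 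This eliminates both inductions and the explicit recursion polynomials $F_t,G_t,H_t$; what the paper's version buys in exchange is a fully explicit rewriting procedure (it exhibits the polynomial $h$ constructively term by term), and its structure is the one that motivates the parity/congruence condition generalized in Theorem~\ref{thm:MAIN2}. Two small remarks: your invocation of Lemma~\ref{lemma:MAIN1} is actually superfluous once you expand $(R_0(u_i)+R_1(u_i))^{a_i}$ binomially, and your justification that $R_0(R_1(u_k)p)=0$ is most cleanly seen directly from $R_0(g)(x)=\tfrac12\bigl(g(x)+g(\delta x)\bigr)=0$ for anti-invariant $g$, rather than through parts (ii) and (iii) of the proposition; neither affects correctness.
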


\begin{proof}
Let us fix $\delta \in \Gamma  \setminus \Gamma_{+}$ and let $\{u_1,\dots,u_s\}$ be a
Hilbert basis of $\mathcal{P}(H)$. We need to show that every polynomial function
$f \in\mathcal{P}(\Gamma)$ can be written as
\[f = h\bigl(R_0(u_i), R_1(u_i)R_1(u_j), \ 1 \leq i,j \leq s\bigr),\] where $h: \R^{s(s+3)/2} \rightarrow \R$ is a polynomial function. The proof is done by induction
on the cardinality $s$ of the set $\{u_1, \dots, u_s\}$.

Let $f \in \mathcal{P}(\Gamma).$ From Proposition~\ref{prop:PROPERTIES1}, there exists $\tilde{f}
\in\mathcal{P}(\Gamma_+)$ such that
$R_0(\tilde{f})=f$. First we write
\[ \tilde{f}(x)~=~\sum_{\alpha} a_\alpha \, u^\alpha(x)~,
\]
with $a_\alpha\in\R$, where $\alpha= (\alpha_1, \ldots,
\alpha_s) \in \N^s$, and $u^\alpha~=~u_1^{\alpha_1}\ldots u_s^{\alpha_s}$. Then
\[
 f~=~\sum_{\alpha} a_\alpha \,
 (u^\alpha + \delta u^\alpha)~.
\]
Assume $s=1$. Then we may write $f~=~\sum_{i} a_i \, (u^i + \delta u^i)~,$
where $a_i \in \R$ and $i \in\N$. Now we use (\ref{eq:PROPERTIE1}) to get
\[
 f~=~\sum_i a_i \, h_i\bigl(R_0(u), (R_1(u))^2\bigr)~=~h\bigl(R_0(u), (R_1(u))^2\bigr)~,
\] with $h:\R^2 \rightarrow \R$ a polynomial function.

Assume that given $l \in \N,$ for all sets $\{u_1, \ldots, u_l\}$ with $1 \leq l \leq s-1$ we have
\begin{equation} \label{induction1}
 \sum_\alpha a_\alpha \, (u^\alpha
+ \delta u^\alpha)~=~h_{\alpha}\bigl(R_0(u_i), R_1(u_i)R_1(u_j), \ 1 \leq i,j \leq l\bigr),
\end{equation} where $a_i \in \R,$ $\alpha \in\N^l$ and $h_{\alpha} : \R^{l(l+3)/2} \rightarrow \R$ is a polynomial function. Consider the set $\{u_1, \ldots, u_l\} \cup \{u_{l+1}\}$ and let $f \in \mathcal{P}(\Gamma).$ Then we may write
\[
\begin{split}
f
 & = \!\!\sum_{\alpha,\alpha_{\ell+1}} \!\!
       a_{\alpha,\alpha_{\ell+1}}
       (u^\alpha u_{\ell+1}^{\alpha_{\ell+1}}
       +\delta u^\alpha \delta u_{\ell+1}^{\alpha_{\ell+1}}) \\
 & =  \frac{1}{2} \bigl [ \!\!\sum_{\alpha,\alpha_{\ell+1}} \!\!
      a_{\alpha,\alpha_{\ell+1}} \!
      (u^{\alpha}
      + \delta u^{\alpha}) (u_{\ell+1}^{\alpha_{\ell+1}}
      + \delta u_{\ell+1}^{\alpha_{\ell+1}}) \ + \\ &
      \hspace*{6.4cm} + \!\!\sum_{\alpha,\alpha_{\ell+1}} \!\!
      a_{\alpha,\alpha_{\ell+1}} \! (u^{\alpha}
      - \delta u^{\alpha}) (u_{\ell+1}^{\alpha_{\ell+1}}
      - \delta u_{\ell+1}^{\alpha_{\ell+1}}) \bigr ] \\
      & = \frac{1}{2} \ \bigl [ \ \!\!\sum_{\alpha_{\ell+1}} \!\!
      (u_{\ell+1}^{\alpha_{\ell+1}} + \delta u_{\ell+1}^{\alpha_{\ell+1}}) \! \sum_{\alpha} \!\!
      a_{\alpha,\alpha_{\ell+1}} \! (u^{\alpha} + \delta u^{\alpha}) \ + \\ & \hspace*{6.1cm} + \
      \!\!\sum_{\alpha_{\ell+1}} \!\!
      (u_{\ell+1}^{\alpha_{\ell+1}}
      - \delta u_{\ell+1}^{\alpha_{\ell+1}}) \! \sum_{\alpha} \!\!
      a_{\alpha,\alpha_{\ell+1}} \! (u^{\alpha}
      - \delta u^{\alpha})  \bigr ],
\end{split}
\]
with
 $u^\alpha,\,u_{\ell+1}^{\alpha_{\ell+1}}\in\mathcal{P}(\Gamma_{+})$,
$\alpha\in\N^\ell$ and $\alpha_{\ell+1}\in\N$.

By (\ref{eq:PROPERTIE3}) we have
\[u_{l+1}^{\alpha_{l+1}} - \delta u_{l+1}^{\alpha_{l+1}} = R_1(u_{l+1})H_{\alpha_{l+1}}\bigl(R_0(u_{l+1}),(R_1(u_{l+1}))^2\bigr)\] and by the induction hypothesis \eqref{induction1} we can write
\[
 u_{\ell+1}^{\alpha_{\ell+1}}
 + \delta u_{\ell+1}^{\alpha_{\ell+1}}~=~h_{\alpha_{\ell+1}}\bigl(R_0(u_{l+1}),(R_1(u_{l+1}))^2\bigr)
\]
and
\[
 \sum_\alpha a_{\alpha,\alpha_{\ell+1}}
 (u^\alpha + \delta u^\alpha)~=~h_{\alpha,\alpha_{\ell+1}}\bigl(R_0(u_i),R_1(u_i)R_1(u_j), \ 1 \leq i,j \leq l\bigr),
\]
with $H_{\alpha_{\ell+1}}, h_{\alpha_{\ell+1}},$
$\,h_{\alpha,\alpha_{\ell+1}}$ polynomial functions. Then $f$ can be rewritten as
\begin{equation}
\label{eq:F}
 \begin{split} & \frac{1}{2} \ \bigl [ \ \!\!\sum_{\alpha_{\ell+1}} \!\!
 h_{\alpha_{\ell+1}}\bigl(R_0(u_{l+1}),(R_1(u_{l+1}))^2\bigr)
       h_{\alpha,\alpha_{\ell+1}}\bigl(R_0(u_i),R_1(u_i)R_1(u_j), \ 1 \leq i,j \leq l\bigr)\\ &  \hspace*{2.4cm} + \
      \!\!\sum_{\alpha_{\ell+1}} \!\!
      R_1(u_{l+1})H_{\alpha_{l+1}}\bigl(R_0(u_{l+1}),(R_1(u_{l+1}))^2\bigr) \! \sum_{\alpha} \!\!
      a_{\alpha,\alpha_{\ell+1}} \! (u^{\alpha}
      - \delta u^{\alpha})  \bigr ].
\end{split}
\end{equation}
Now we prove by induction on $l$ that for all $1 \leq i \leq l$ and $1 \leq k,m \leq j-1,$ $k \neq m,$ we have
\begin{equation}
\label{eq:DIFERENCE}
u^{\alpha} - \delta u^{\alpha} = \sum_{j=1}^l R_1(u_j)H_{\alpha, j}\bigl(R_0(u_i),(R_1(u_i))^2,R_1(u_k)R_1(u_m)\bigr),
\end{equation} where $H_{\alpha,j}: \R^n \rightarrow \R$ are polynomial functions, with $n = 2l + {(j-1)(j-2)}/{2}.$ In fact, for $l=1$ it is obvious by (\ref{eq:PROPERTIE3}). Assume then that
\begin{equation}
\label{induction2}
u^{\tilde{\alpha}} - \delta u^{\tilde{\alpha}} = \sum_{j=1}^{l-1} R_1(u_j)H_{\tilde{\alpha}, j}\bigl(R_0(u_i),(R_1(u_i))^2,R_1(u_k)R_1(u_m)\bigr),\end{equation} where $1 \leq i \leq l-1,$ $1 \leq k,m \leq j-1,$ $k \neq m,$ with $\tilde{\alpha}= (\alpha_1, \ldots,
\alpha_{l-1})$ and $u^{\tilde{\alpha}}~=~u_1^{\alpha_1}\ldots u_{l-1}^{\alpha_{l-1}}.$ We write
\[u^{\alpha} - \delta u^{\alpha} = \frac{1}{2} \bigl[(u^{\tilde{\alpha}} - \delta u^{\tilde{\alpha}})(u_l^{\alpha_l} + \delta u_l^{\alpha_l}) + (u^{\tilde{\alpha}} + \delta u^{\tilde{\alpha}})(u_l^{\alpha_l} - \delta u_l^{\alpha_l}) \bigr].\] By the induction hypotheses (\ref{induction1}) and (\ref{induction2}), we have $u^{\alpha} - \delta u^{\alpha}$ given by
\[\begin{split}  & \frac{1}{2} \bigl[ \sum_{j=1}^{l-1}R_1(u_j)H_{\tilde{\alpha}, j}\bigl(R_0(u_i),(R_1(u_i))^2,R_1(u_k)R_1(u_m)\bigr)h_{\alpha_l}\bigl(R_0(u_l),(R_1(u_l))^2\bigr) \\
& \hspace*{3.2cm} + h_{\tilde{\alpha_l}}\bigl(R_0(u_i),R_1(u_i)R_1(u_n)\bigr)R_1(u_l)H_{\alpha_l}\bigl(R_0(u_l),(R_1(u_l))^2\bigr) \bigr], \end{split} \]
where $1 \leq i,n \leq l-1$ and $1 \leq k,m \leq j-1,$ with $k \neq m.$ Thus \[ \begin{split} u^{\alpha} - \delta u^{\alpha} & = \bigl[ \sum_{j=1}^{l-1}R_1(u_j)G_{\alpha,j}\bigl(R_0(u_i),(R_1(u_i))^2,R_1(u_k)R_1(u_m)\bigr) \\ & \hspace*{3.7cm} + R_1(u_l) G_{\alpha_l}\bigl(R_0(u_i),(R_1(u_i))^2,R_1(u_p)R_1(u_q)\bigr) \bigr], \end{split} \]
where $1 \leq i \leq l,$ $1 \leq k,m \leq j-1,$ with $k \neq m,$ and $1 \leq p,q \leq l-1,$ with $p \neq q,$ which completes the proof of (\ref{eq:DIFERENCE}). Therefore
\[
 \begin{split}
 \sum_{\alpha} \!\!
      a_{\alpha,\alpha_{\ell+1}} \! (u^{\alpha} - \delta u^{\alpha}) & = \\ & \hspace*{-.6cm} = \sum_{j=1}^l R_1(u_j) \ \sum_{\alpha} \!\!
      a_{\alpha,\alpha_{\ell+1}} H_{\alpha, j}\bigl(R_0(u_i),(R_1(u_i))^2,R_1(u_k)R_1(u_m)\bigr) \\
      & \hspace*{-.6cm} = \sum_{j=1}^l R_1(u_j) H_j\bigl(R_0(u_i),(R_1(u_i))^2,R_1(u_k)R_1(u_m)\bigr),
      \end{split}\] with $1 \leq i \leq l,$ $1 \leq k,m \leq j-1,$ $k \neq m.$ Returning to (\ref{eq:F}) we obtain
\[
 \begin{split}
f
 & = H_{\alpha,\alpha_{\ell+1}}\bigl(R_0(u_i),(R_1(u_i))^2,R_1(u_k)R_1(u_m)\bigr)\\ & \hspace*{2.4cm} +
      \sum_{j=1}^l R_1(u_j)R_1(u_{l+1}) H_{j,\alpha_{l+1}}\bigl(R_0(u_i),(R_1(u_i))^2,R_1(u_p)R_1(u_q)\bigr) \bigr ],
\end{split}
\] with  $1 \leq i \leq l+1,$ $1 \leq k,m \leq l,$ $ k \neq m,$ and $1 \leq p,q \leq j-1,$ $p \neq q.$ Therefore
\[f = H\bigl(R_0(u_i), R_1(u_i)R_1(u_j), 1 \leq i,j \leq l+1\bigr),\]  as desired. \qed
\end{proof}\\

\begin{example}[$\Gamma=$ {\bf O}(2)] \rm
Consider the orthogonal group {\bf O}(2) acting on $\C \times \R$,
where the rotations $\theta \in {\bf SO}(2)$ and the flip $\kappa$
act as
\[\theta(z,x)~=~(e^{i \theta} z, x)
 \qquad\text{and}\qquad \kappa (z,x)~=(~\bar{z}, -x). \]
Consider $\Gamma_{+} = {\bf SO}(2)$ and $\delta = \kappa \in
\Gamma_{-}$. We have that $u_1(z,x)=z\bar{z}$ and $u_2(z,x)=x$
form a Hilbert basis of $\mathcal{P}(\SOtwo)$. We have
\[R_0(u_1)(z,x)~=~z\bar{z}, \ \ R_1(u_2)(z,x)~=~x, \ \ R_0(u_2)(z,x) = R_1(u_1)(z,x) = 0.\]
By Theorem \ref{thm:MAIN1}, the ring $\mathcal{P}({\bf O}(2))$ is generated by
\[ R_0(u_1)(z,x)~=~z\bar{z} \quad \text{and} \quad R_1^2(u_2)(z,x)~=~x^2. \qquad \END\]
\end{example}

\vspace*{.4cm}

\begin{example}[$\Gamma=({\mathbf{D}_6}  \rtimes  \Ttwo) \oplus \Ztwo$] \rm
Consider the group $\Gamma=({\mathbf{D}_6} \rtimes  \Ttwo) \oplus
\Ztwo$ acting on $\C^3$ as follows: for $(z_1, z_2, z_3) \in
\C^3$,

\begin{itemize}
\item[(a)] the permutations in ${\mathbf{D}_6}$ permute the coordinates
of $(z_1,z_2,z_3),$
\item[(b)] the generator flip in ${\mathbf{D}_6}$ acts as $(z_1,z_2,z_3)
\mapsto (\bar{z}_1,\bar{z}_2,\bar{z}_3),$
\item[(c)] $\theta = (\theta_1, \theta_2)$ in the torus $\Ttwo$ acts as
\[\theta \cdot (z_1,z_2,z_3) =
(e^{i\theta_1p}z_1,e^{i\theta_2p}z_2,e^{-i(\theta_1 +
\theta_2)p}z_3),\]
\item[(d)] the  reflection $\kappa \in \Ztwo$ acts as minus the
identity: \[\kappa(z_1,z_2,z_3) = (-z_1,-z_2,-z_3).\]
\end{itemize}

In this example we consider $\delta = \kappa \in \Gamma_{-}$ and
so $\Gamma_{+} = {\mathbf{D}_6} \rtimes \Ttwo.$

Let $v_j = z_j\bar{z}_j,$ $j = 1,2,3$ and consider the elementary
symmetric polynomials in $v_j:$
\[u_1 = v_1 + v_2 + v_3, \quad u_2 =
v_1v_2 + v_1v_3 + v_2v_3, \quad u_3 =  v_1 v_2 v_3.\] Also, let
$u_4 = z_1z_2z_3 + \bar{z}_1\bar{z}_2\bar{z}_3$. It is well known
that $\{u_1, \dots, u_4\}$ is a Hilbert basis for
$\mathcal{P}_{\C^3}({\mathbf{D}_6} \rtimes \Ttwo)$ (see
Golubitsky \etal~\cite[Theorem 3.1(a), p. 156]{GS69}).

We have $R_0(u_j)=u_j$ and $R_1(u_j)=0,$ for $j=1,2,3$, $R_0(u_4)=0$ and $R_1(u_4)=u_4.$ By Theorem \ref{thm:MAIN1}, the ring $\mathcal{P}(\Gamma)$ is generated by $u_1,$ $u_2,$ $u_3$ and $u_4^2.$  \END
\end{example}

The following theorem is an extension of Theorem \ref{thm:MAIN1} in the computation of $\Gamma$-invariant generators for index $m$ greater than 2. The basic idea here is to take into account that $f \in \mathcal{P}(\Gamma)$ if, and only if, $R_0(f)=f$ and then to deduce the invariant generators under the action of $\Gamma$ from a Hilbert basis of the ring of invariants under the action of the subgroup $H.$

\begin{theorem} \label{thm:MAIN2}
Let $H$ be the kernel of an epimorphism $\sigma$ as in (\ref{eq:HOMOSIGMA}). Let $\{u_1,\ldots,u_s\}$ be a
Hilbert basis of the ring $\mathcal{P}(H)$. For each $j \in \{1, \ldots, m-1\},$ set $\alpha(j) = \sum_{i=1}^{s} \alpha_{ji},$ with $\alpha_{ji} \in \N.$ Then the invariants \[R_0(u_1), R_0(u_2), \ldots, R_0(u_s),\]
\[R_1(u_1)^{\alpha_{11}} \ldots R_1(u_s)^{\alpha_{1s}} \ldots R_{m-1}(u_1)^{\alpha_{m-1, 1}} \ldots R_{m-1}(u_s)^{\alpha_{m-1,s}},\]

\noindent such that $\displaystyle \sum_{j=1}^{m-1}j \alpha(j) \equiv 0 \ \text{mod} \ m,$ form a Hilbert basis of the ring $\mathcal{P}(\Gamma).$

\end{theorem}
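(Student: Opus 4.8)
The plan is to mimic the structure of the proof of Theorem~\ref{thm:MAIN1}, but now exploiting the full decomposition from Theorem~\ref{thm:RELATIVEDECOMP} instead of just the two-term splitting. First I would reduce to the right target: a polynomial $f\in\mathcal{P}(\Gamma)$ satisfies $R_0(f)=f$, and by Proposition~\ref{prop:PROPERTIES1}(ii) there is $\tilde f\in\mathcal{P}(H)$ with $R_0(\tilde f)=f$; writing $\tilde f=\sum_\alpha a_\alpha u^\alpha$ in the given Hilbert basis and applying $R_0$ gives $f=\frac1m\sum_{k=0}^{m-1}\sum_\alpha a_\alpha\,\delta^k u^\alpha$. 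So it suffices to show each symmetrized monomial $\sum_{k=0}^{m-1}\delta^k u^\alpha$ lies in the subring generated by the proposed set. The key observation is that $\delta^k u^\alpha=\prod_{i=1}^s(\delta^k u_i)^{\alpha_i}$, and each $u_i$ decomposes as $u_i=\sum_{j=0}^{m-1}R_j(u_i)$ with $\delta^k R_j(u_i)=\sigma^j(\delta)^k R_j(u_i)$ by Lemma~\ref{lem:CARACTERIZATION}. Hence if we expand $u^\alpha=\prod_i\bigl(\sum_j R_j(u_i)\bigr)^{\alpha_i}$, each resulting term is a product of the form $R_0(u_?)\cdots$ times a ``pure'' product $\prod_{j\ge1} R_j(u_1)^{\beta_{j1}}\cdots R_j(u_s)^{\beta_{js}}$, and under $\delta^k$ such a term is multiplied by $\sigma(\delta)^{k\sum_j j\,\beta(j)}$ where $\beta(j)=\sum_i\beta_{ji}$.

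Next I would average over $k$: summing $\delta^k$ applied to each such term over $k=0,\dots,m-1$ and using the root-of-unity identity \eqref{eq:ROOTUNITY} kills every term for which $\sum_j j\,\beta(j)\not\equiv0\pmod m$ and multiplies by $m$ the terms for which $\sum_j j\,\beta(j)\equiv0\pmod m$. Therefore $f$ is a polynomial combination of the surviving products $R_0(u_1),\dots,R_0(u_s)$ together with the ``resonant'' monomials $\prod_{j=1}^{m-1}\prod_{i=1}^s R_j(u_i)^{\beta_{ji}}$ satisfying $\sum_{j=1}^{m-1}j\,\beta(j)\equiv0\pmod m$ (factors $R_0(u_i)^{\beta_{0i}}$ with exponent $\beta_{0i}$ are already among the generators, so they cause no trouble). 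This is precisely the claimed generating set. For the converse, one must check that every element of the proposed set actually lies in $\mathcal{P}(\Gamma)$: each $R_0(u_i)\in\mathcal{P}(\Gamma)$ by Proposition~\ref{prop:PROPERTIES1}(ii), and for a resonant monomial $P=\prod_{j,i}R_j(u_i)^{\alpha_{ji}}$ one computes $\delta P=\sigma(\delta)^{\sum_j j\,\alpha(j)}P=P$, while invariance under all of $H$ is automatic since each $R_j(u_i)\in\mathcal{P}(H)$; since $\Gamma=H\cup\delta H\cup\cdots\cup\delta^{m-1}H$, invariance under $\delta$ and $H$ gives invariance under $\Gamma$.

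The step I expect to require the most care is the bookkeeping in the multinomial expansion: when I expand $\prod_{i=1}^s\bigl(\sum_{j=0}^{m-1}R_j(u_i)\bigr)^{\alpha_i}$ I get a sum of monomials indexed by exponent arrays $(\beta_{ji})_{0\le j\le m-1,\,1\le i\le s}$ with $\sum_j\beta_{ji}=\alpha_i$, and I need to track the total ``charge'' $\sum_{j=1}^{m-1}j\bigl(\sum_i\beta_{ji}\bigr)$ modulo $m$ cleanly, noting that the $R_0$-factors contribute zero charge. The analogue of Lemma~\ref{lemma:MAIN1} in the $m=2$ case (rewriting $u^t\pm\delta u^t$ in terms of $R_0(u)$ and $(R_1(u))^2$) is not needed here because we allow arbitrary resonant products rather than trying to pare the list down to $R_1(u_i)R_1(u_j)$; that simplification was exactly the ``special feature'' of $m=2$ mentioned before Theorem~\ref{thm:MAIN1}. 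So the main work is organizational: establishing that each symmetrized basis monomial of $\tilde f$ lands in the span of resonant products, and verifying the resonance condition $\sum_{j=1}^{m-1}j\,\alpha(j)\equiv0\pmod m$ is both necessary (from \eqref{eq:ROOTUNITY}) and sufficient (from the $\delta$-eigenvalue computation) for membership in $\mathcal{P}(\Gamma)$.
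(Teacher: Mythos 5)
Your proposal is correct and follows essentially the same route as the paper's proof: replace the Hilbert basis $\{u_i\}$ by $\{R_j(u_i)\}$ via $u_i=\sum_{j=0}^{m-1}R_j(u_i)$, track the eigenvalue $\sigma(\delta)^{k\sum_j j\,\alpha(j)}$ of each monomial under $\delta^k$, and use the root-of-unity identity \eqref{eq:ROOTUNITY} to kill the non-resonant monomials, leaving exactly the stated generators. Your formulation of this step --- averaging $\delta^k u^\alpha$ over $k$ so that non-resonant terms cancel, rather than arguing from a single exponent $N$ as the paper does --- together with your explicit check that the resonant products really lie in $\mathcal{P}(\Gamma)$, is if anything a slightly more careful rendering of the same argument.
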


\begin{proof}
Let us fix $\delta \in \Gamma$ such that $\sigma(\delta)$ is the primitive $m$-th root of unity  and let $\{u_1,\dots,u_s\}$ be a
Hilbert basis of $\mathcal{P}(H)$. Let $f \in \mathcal{P}(\Gamma).$ From
Proposition~\ref{prop:PROPERTIES1}, $R_0(f) = f,$ which gives
\begin{equation}
\label{eq:R0}
f(x) = \frac{1}{m-1} \sum_{k=1}^{m-1}f(\delta^k x), \quad \forall \ x \in V.
\end{equation} For each $i \in \{1, \ldots, s\}$ we have
$u_i = \displaystyle \sum_{j=0}^{m-1}R_j(u_i).$ So \[\{R_0(u_i), R_1(u_i), \ldots, R_{m-1}(u_i): 1 \leq i \leq s\}\] is a new Hilbert basis of $\mathcal{P}(H).$ Using multi-index notation, every polynomial function $f \in\mathcal{P}(\Gamma)$ can be written as
\[f~=~\sum a_{\alpha} R_0(u_i)^{\alpha_{\!0}} \,
 R_{1}(u_i)^{\alpha_{\!1}} \ldots R_{m-1}(u_i)^{\alpha_{m-1}}~,\]
 with $a_{\alpha} \in \C,$ where $\alpha = (\alpha_{\!0},\ldots,\alpha_{m-1}),$ $\alpha_j = (\alpha_{j\!1},\ldots,\alpha_{js}) \in \N^s$ and $R_j(u_i)^{\alpha_{\!j}} = R_{j}(u_1)^{\alpha_{j\!1}} \ldots R_{j}(u_s)^{\alpha_{js}},$ for $j \in \J.$

Now, for every $k \in \N$ and $x \in V,$ we have \[R_j(u_i)(\delta^k x) = \sigma^j(\delta^k)R_j(u_i)(x) = \sigma^{jk}(\delta)R_j(u_i)(x),\] from which we obtain
\begin{equation}
\label{eq:GENERAL}
 f(\delta^k x)~=~\sum \sigma(\delta)^{k \sum_{j=1}^{m-1} j \alpha(j)} a_{\alpha} \,R_0(u_i)^{\alpha_{\!0}} \,
 R_{1}(u_i)^{\alpha_{\!1}} \ldots R_{m-1}(u_i)^{\alpha_{m-1}}(x),
\end{equation} where $\alpha(j) = \alpha_{j1} + \ldots + \alpha_{js},$ for all $1 \leq j \leq m-1.$

We now use (\ref{eq:GENERAL}) in (\ref{eq:R0}) to get
\[
 \sigma^N(\delta) + \sigma^{2N}(\delta) + \ldots + \sigma^{(m-1)N}(\delta) = m-1,\] where $N= \displaystyle \sum_{j=1}^{m-1} j \alpha(j) \in \N.$ It follows from
 (\ref{eq:ROOTUNITY}) that $\sigma^N(\delta) = 1,$ ie, $N \equiv$ 0 mod $m.$ Therefore the generators of $\mathcal{P}(\Gamma),$ as a ring, are given by \[R_0(u_1), R_0(u_2), \ldots, R_0(u_s)\] and by the products
 \[R_1(u_1)^{\alpha_{11}} \ldots R_1(u_s)^{\alpha_{1s}} \ldots R_{m-1}(u_1)^{\alpha_{m-1,1}} \ldots R_{m-1}(u_s)^{\alpha_{m-1,s}},
\] for $\displaystyle \sum_{j=1}^{m-1}j \alpha(j) \equiv 0$ mod $m.$ \qed
\end{proof}

\vspace*{.2cm}

\begin{example}[$\Gamma= \Zthree \times \Zthree$] \rm
Consider the group $\Zthree \times \Zthree$ acting on $V=\C^2$,
where $\xi = (\xi_1,\xi_2) \in \Gamma$ acts on $z=(z_1,z_2) \in V$  as
\[\xi(z)~=~(\xi_1 z_1,\xi_2 z_2).\] Fix $\sigma: \Gamma \rightarrow \Zthree$ such that $H = \ker \sigma = \Zthree \times \mathrm{1},$ and set $\delta = (1,e^{\frac{2\pi i}{3}}) \in \Gamma \setminus H$.
The functions $u_i,$ $1 \leq i \leq 5,$ given by \[u_1(z_1,\bar{z_1},z_2,\bar{z_2})=z_1\bar{z}_1, \quad u_2(z_1,\bar{z_1},z_2,\bar{z_2})=z_1^3, \quad u_3(z_1,\bar{z_1},z_2,\bar{z_2})=\bar{z}_1^3,\] \[u_4(z_1,\bar{z_1},z_2,\bar{z_2})=z_2, \quad u_5(z_1,\bar{z_1},z_2,\bar{z_2})=\bar{z}_2\]
form a Hilbert basis of $\mathcal{P}(H)$. In this case, we have
$R_0(u_i) = u_i,$ for $\ 1 \leq i \leq 3,$ $R_1(u_4) = u_4$ and $R_2(u_5) = u_5.$ By  Theorem \ref{thm:MAIN2}, the generators of the ring $\mathcal{P}(\Gamma)$ are \[R_0(u_1),\ R_0(u_2),\ R_0(u_3),\ R_0(u_4),\ R_0(u_5), \ R_1(u_i)R_1(u_j)R_1(u_k),\] \[ R_2(u_i)R_2(u_j)R_2(u_k), \ R_1(u_i)R_2(u_j),\] for all $1 \leq i, j, k \leq 5.$ Therefore the ring $\mathcal{P}(\Zthree \times \Zthree)$ is generated by the elements
\[ z_1\bar{z}_1, \ z_1^3, \ \bar{z}_1^3, \ z_2\bar{z}_2, \ z_2^3, \ \bar{z}_2^3.\] \END

\end{example}

\end{document}